\def\hang{\hangindent\parindent}
\def\rf{\par\noindent\hang}
\def\ssk{\smallskip}
\def\rf{\par\noindent\hang}
\def\nin{\noindent}
\newtheorem{theorem}{Theorem}
\newtheorem{lemma}{Lemma}
\newtheorem{corollary}{Corollary}
\begin{document}

\baselineskip=20pt

\begin{center} \large{{\bf ADMISSIBILITY OF THE USUAL CONFIDENCE INTERVAL IN
LINEAR REGRESSION}}
\end{center}

\smallskip

\begin{center} SHORT RUNNING TITLE: ADMISSIBILITY IN REGRESSION
\end{center}

\smallskip

%\vskip .5in

\begin{center}
\large{PAUL KABAILA$^{1*}$, KHAGESWOR GIRI$^2$ {\normalsize AND} HANNES LEEB$^3$}
\end{center}

\begin{center}
{\sl 1. Department of Mathematics and Statistics \\
La Trobe University \\
Bundoora Victoria 3086 \\
Australia }
\end{center}

\begin{center}
{\sl 2. Future Farming Systems Research \\
Department of Primary Industries \\
600 Sneydes Road \\ Werribee 3030 Victoria \\
Australia }
\end{center}

\begin{center}
{\sl 3. Department of Statistics \\
University of Vienna \\
Universit\"atsstr. 5/3,
A-1010 Vienna  \\
Austria}
\end{center}

\date{}
\pagenumbering{arabic}

%\msk

%\bsk
\begin{center}
 {\bf Summary}
\end{center}

\ssk

\noindent Consider a linear regression model with independent and identically normally distributed random
errors. Suppose that the parameter of interest is a specified linear combination of the regression
parameters. We prove that the usual confidence interval for this parameter is admissible within
a broad class of confidence intervals.

\noindent

\bigskip
\bigskip

\nin{\sl Keywords:} admissibility; compromise decision theory;
confidence interval; decision theory.

\bigskip
\bigskip

\nin $^*$ Author to whom correspondence should be addressed.
Department of Mathematics and Statistics, La Trobe University,
Victoria 3086, Australia.
Tel.: +61 3 9479 2594, fax:
 +61 3 9479 2466,
 {e-mail:} P.Kabaila@latrobe.edu.au

\vfil
\eject

\begin{center}
{\large{\bf 1. Introduction}}
\end{center}

\medskip

 Consider the linear regression model
 $Y = X \beta + \varepsilon$,
 where $Y$ is a random $n$-vector of responses, $X$ is a known $n \times p$ matrix with linearly
independent columns, $\beta$ is an unknown parameter $p$-vector and
$\varepsilon \sim N(0, \sigma^2 I_n)$ where $\sigma^2$ is an unknown positive parameter.
Let $\hat \beta$ denote the least squares estimator of $\beta$.
Also, define $\hat \sigma^2 = (Y - X \hat \beta)^T (Y - X \hat \beta)/(n-p)$.

Suppose that the parameter of interest is $\theta = a^T \beta$ where $a$ is a given
$p$-vector ($a \ne 0$). We seek a $1-\alpha$ confidence interval for $\theta$.
Define the quantile $t(m)$ by the requirement that
$P \big(-t(m) \le T \le t(m) \big) = 1-\alpha$ for $T \sim t_m$.
Let $\hat \Theta$ denote $a^T \hat \beta$, i.e. the least squares estimator of
$\theta$. Also
let $v_{11}$ denote the variance of $\hat \Theta$ divided by $\sigma^2$.
The usual $1-\alpha$ confidence interval for $\theta$ is
\begin{equation*}
I = \big [\hat \Theta - t(m) \sqrt{v_{11}} \hat \sigma, \,
\hat \Theta + t(m) \sqrt{v_{11}} \hat \sigma \big]
\end{equation*}
where $m=n-p$.
Is this confidence interval admissible? The admissibility of a confidence interval is a much more difficult
concept than the admissibility of a point estimator, since confidence intervals must
satisfy a coverage probability constraint. Also, admissibility of confidence intervals
can be defined in either weak or strong forms (Joshi, 1969, 1982).

Kabaila \& Giri (2009, Section 3) describe a broad class ${\cal D}$ of confidence
intervals that includes $I$. The main result of the present paper, presented in
Section 3, is that $I$ is strongly admissible within the class ${\cal D}$.
An attractive feature of the proof of this
result is that, although lengthy, this proof is quite straightforward and elementary.
Section 2 provides a brief description of this class ${\cal D}$.
For completeness, in Section 4 we describe a strong admissibility result, that follows from the results of Joshi (1969),
for the usual $1-\alpha$ confidence interval for $\theta$ in the somewhat artificial situation
that the error variance $\sigma^2$ is assumed to be known.

\bigskip

\begin{center}
{\large{\bf 2. Description of the class $\boldsymbol{\cal D}$}}
\end{center}

\medskip

Define the parameter $\tau = c^T \beta - t$ where the vector $c$ and the number $t$
are given and $a$ and $c$ are linearly independent.
Let $\hat \tau$ denote $c^T \hat \beta - t$ i.e. the least squares estimator of $\tau$.
Define the matrix $V$ to be the covariance matrix of $(\hat \Theta, \hat \tau)$ divided by $\sigma^2$.
Let $v_{ij}$ denote the $(i,j)$ th element of $V$.
We use the notation $[a \pm b]$ for the interval $[a-b, a+b]$ ($b > 0$).
Define the following confidence interval
for $\theta$
\begin{align}
\label{J(b,s)}
J(b,  s) = \bigg [ \hat \Theta -
\sqrt{v_{11}} \hat \sigma \, b\bigg(\frac{\hat{\tau}}{\hat \sigma \sqrt{v_{22}}}\bigg) \, \pm \,
\sqrt{v_{11}} \hat \sigma \, s\bigg(\frac{|\hat{\tau}|}{\hat \sigma \sqrt{v_{22}}}\bigg)
\bigg ]
\end{align}
where the functions $b$ and $s$ are required to satisfy the following restrictions.
The function $b: \mathbb{R} \rightarrow \mathbb{R}$  is
an odd function and $s: [0, \infty)
\rightarrow (0, \infty)$. Both $b$ and $s$ are bounded. These functions are also
continuous except, possibly, at a finite number of values. Also,
$b(x)=0$ for all $|x| \ge d$ and $s(x)=t(m)$ for all $x \ge d$
where $d$ is a given positive number. Let ${\cal F}(d)$ denote the class
of pairs of functions $(b,s)$ that satisfy these restrictions, for given $d$ ($d>0$).

Define ${\cal D}$ to be the class of all
confidence intervals for $\theta$ of the form \eqref{J(b,s)}, where $c$, $t$, $d$, $b$ and
$s$ satisfy the stated restrictions. Each member of this class is
specified by $(c,t, d, b,s)$.
%Since $b(x)=0$ for all $|x| \ge d$ and
%$s(x)=t_{n-p,1-\frac{\alpha}{2}}$ for all $x \ge d$,
%each member of this class is specified by
%$b: [0,d) \rightarrow \mathbb{R}$ and $s: [0, d)
%\rightarrow [0, \infty)$.
Apart from the usual $1-\alpha$ confidence interval $I$ for $\theta$,
the class ${\cal D}$ of confidence intervals for $\theta$ includes the following:
\begin{enumerate}

\item
Suppose that we carry out a preliminary hypothesis test of the null hypothesis $\tau = 0$
against the alternative hypothesis $\tau \ne 0$. Also suppose that we construct a confidence interval
for $\theta$ with nominal coverage $1-\alpha$ based on the assumption that the selected model had been
given to us {\it a priori} (as the true model). The resulting confidence interval,
called the naive $1-\alpha$ confidence interval, belongs to the class
${\cal D}$ (Kabaila \& Giri, 2009, Section 2).

\item
Confidence intervals for $\theta$ that are constructed to utilize (in the particular manner described by
Kabaila \& Giri, 2009) uncertain prior information that $\tau=0$.

\end{enumerate}

\noindent Let $K$ denote the usual $1-\alpha$ confidence interval for $\theta$ based on the assumption that $\tau = 0$.
The naive $1-\alpha$ confidence interval, described in (a), may be expressed in the following form:
\begin{equation}
\label{mixture}
h \left(\frac{|\hat \tau|}{\hat \sigma \sqrt{v_{22}}} \right ) I +
\left ( 1 - h \left(\frac{|\hat \tau|}{\hat \sigma \sqrt{v_{22}}} \right ) \right) K
\end{equation}
where $h: [0, \infty) \rightarrow [0,1]$ is the unit step function defined by
$h(x) = 0$ for all $x \in [0,q]$ and $h(x) = 1$ for all $x > q$.
Now suppose that we replace $h$ by a continuous increasing function satisfying $h(0)=0$ and
$h(x) \rightarrow 1$ as $x \rightarrow \infty$ (a similar construction is extensively used
in the context of point estimation by Saleh, 2006). The confidence interval \eqref{mixture}
is also a member of the class ${\cal D}$.

\bigskip

\begin{center}
{\large{\bf 3. Main result}}
\end{center}

\medskip

As noted in Section 2, each member of the class ${\cal D}$ is
specified by $(c,t,d,b,s)$. The following result states that the usual $1-\alpha$ confidence interval
for $\theta$ is strongly admissible within the class ${\cal D}$.

\begin{theorem}
There does not exist $(c,t,d,b,s) \in {\cal D}$ such that the following three conditions hold:
\begin{equation}
\label{exp_len_cond}
\hspace{-2.6cm}(a) \qquad E_{\beta,\sigma^2} \big ( \text{length of } J(b,s) \big ) \le E_{\beta,\sigma^2} \big ( \text{length of } I \big )
\quad \text{for all } (\beta,\sigma^2).
\end{equation}
\begin{equation}
\label{cov_pr_cond}
\hspace{-4.6cm}(b) \qquad P_{\beta,\sigma^2} \big ( \theta \in J(b,s) \big ) \ge P_{\beta,\sigma^2} \big ( \theta \in I \big )
\quad \text{for all } (\beta,\sigma^2).
\end{equation}
$(c)$ \ \ \ \ \ Strict inequality holds in either \eqref{exp_len_cond} or \eqref{cov_pr_cond} for at least one $(\beta,\sigma^2)$.

\end{theorem}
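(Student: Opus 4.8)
The plan is to reduce the two-dimensional parameter $(\beta,\sigma^2)$ to a single real nuisance parameter and then to integrate the two competing criteria over that parameter. Write $R=\hat\sigma/\sigma$, so that $mR^2\sim\chi^2_m$ has a fixed density $f_R$ ($m=n-p$), and set $\gamma=\tau/(\sigma\sqrt{v_{22}})$ and $\rho=v_{12}/\sqrt{v_{11}v_{22}}$; since $a$ and $c$ are linearly independent, $|\rho|<1$. Standardising, put $U=(\hat\Theta-\theta)/(\sigma\sqrt{v_{11}})$ and $\tilde V=\hat\tau/(\sigma\sqrt{v_{22}})$, so that $(U,\tilde V)$ is bivariate normal with means $(0,\gamma)$, unit variances and correlation $\rho$, and is independent of $R$ (because $\hat\beta\perp\hat\sigma^2$). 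Note $\hat\tau/(\hat\sigma\sqrt{v_{22}})=\tilde V/R$. Because $U/R\sim t_m$, the usual interval $I$ has coverage exactly $1-\alpha$ for every $(\beta,\sigma^2)$; hence condition (b) reads $C(\gamma):=P_{\beta,\sigma^2}(\theta\in J(b,s))-(1-\alpha)\ge0$, and, after cancelling the common factor $2\sqrt{v_{11}}\sigma$, condition (a) reads $L(\gamma):=E\big[R\big(s(|\tilde V|/R)-t(m)\big)\big]\le0$. Both $C$ and $L$ depend on the parameters only through $\gamma\in\mathbb R$.

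Next I would integrate each criterion over $\gamma\in\mathbb R$. Conditioning on $(\tilde V,R)$, the conditional law of $U$ is $N(\rho(\tilde V-\gamma),1-\rho^2)$, so the conditional coverage of $J(b,s)$ is a difference of two values of $\Phi$. Integrating over $\gamma$ and using the Gaussian convolution identity $\int_{-\infty}^\infty\Phi\big((A-\rho w)/\sqrt{1-\rho^2}\big)\phi(w)\,dw=\Phi(A)$, followed by the substitution $x=\tilde V/R$, would collapse the triple integral to
\[
\int_{-\infty}^\infty C(\gamma)\,d\gamma=\int_0^\infty r f_R(r)\int_{-d}^d\Big[\Phi\big(r(b(x)+s(|x|))\big)-\Phi\big(r(b(x)-s(|x|))\big)-2\Phi\big(rt(m)\big)+1\Big]\,dx\,dr .
\]
The same manipulation applied to the length excess gives
\[
\int_{-\infty}^\infty L(\gamma)\,d\gamma=\Big(\int_0^\infty r^2 f_R(r)\,dr\Big)\int_{-d}^d\big(s(|x|)-t(m)\big)\,dx .
\]
The interchange of integrals is legitimate because for $|x|\ge d$ one has $b(x)=0$ and $s(|x|)=t(m)$, so both integrands are supported on $|x|<d$ and are bounded; this is precisely where the restriction $(b,s)\in\mathcal F(d)$ is used.

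The heart of the argument is two elementary pointwise inequalities applied to the inner $x$-integrand, for each fixed $r>0$. First, since the $N(0,1)$ density is symmetric and strictly unimodal, among all intervals of a given length the centred one carries the most mass, so $\Phi(r(b+s))-\Phi(r(b-s))\le2\Phi(rs)-1$, with equality iff $b=0$. Second, as $\Phi$ is concave on $[0,\infty)$ and $rs,rt(m)>0$, the tangent-line bound gives $\Phi(rs)-\Phi(rt(m))\le r\phi(rt(m))(s-t(m))$. Chaining these and pulling the $r$-free factor $\int_{-d}^d(s-t(m))\,dx$ out yields
\[
\int_{-\infty}^\infty C(\gamma)\,d\gamma\le 2\Big(\int_0^\infty r^2\phi(rt(m))f_R(r)\,dr\Big)\int_{-d}^d\big(s(|x|)-t(m)\big)\,dx=:2K_1 S,
\]
where $K_1>0$. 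Writing $K_2=\int_0^\infty r^2 f_R(r)\,dr>0$, condition (a) forces $K_2 S=\int L\le0$, hence $S\le0$, so $\int C\le 2K_1 S\le0$; but condition (b) gives $\int C\ge0$. Therefore $\int C=0$, which in turn forces $S=0$ and then $\int L=0$. Finally, $C(\gamma)\ge0$ and $L(\gamma)\le0$ are continuous in $\gamma$ (dominated convergence, the finitely many discontinuities of $b,s$ being $\phi$-null), so vanishing integrals force $C\equiv0$ and $L\equiv0$, contradicting (c).

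I expect the main obstacle to be the rigorous execution of the middle paragraph: justifying the interchange of the $\gamma$-integral with the expectation (via the compact-support-in-$x$ bound sketched above) and verifying the continuity and decay of $C$ and $L$ needed at the end. The two inequalities in the last paragraph are what encode the optimality of the usual interval—centring at $b=0$ and the concavity of $\Phi$—and getting their constants to line up into the single scalar $S$ is the decisive simplification.
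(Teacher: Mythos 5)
Your proposal is correct, and its overall architecture coincides with the paper's: reduce the parameter dependence to the scalar $\gamma=\tau/(\sigma\sqrt{v_{22}})$, integrate both the coverage excess and the length excess over $\gamma\in\mathbb{R}$ (your formulas for $\int C\,d\gamma$ and $\int L\,d\gamma$ agree, after rescaling, with the paper's $\int R_2\,d\gamma$ and $\int R_1\,d\gamma$, noting $C=-R_2$), and finish with the sign-plus-continuity contradiction; that last step is exactly the paper's Lemma~1, which you have inlined. Where you genuinely differ is in how the optimality of $I$ for the weighted combination is established. The paper frames this as compromise decision theory: it minimizes $g(b,s;\lambda)=\lambda\int R_1\,d\gamma+(1-\lambda)\int R_2\,d\gamma$ over all $(b,s)\in{\cal F}(d)$ by a Bayes-rule-style pointwise reduction to $(b(x),s(x))$, shows the minimizer has $b^{\prime}=0$, solves a first-order condition in $s$, and then tunes $\lambda=\lambda^*$ so that the minimizer is $s^{\prime}=t(m)$, i.e.\ $J(b^*,s^*)=I$. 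You instead \emph{verify} optimality directly: the recentering inequality $\Phi(r(b+s))-\Phi(r(b-s))\le 2\Phi(rs)-1$ together with the tangent-line bound $\Phi(rs)-\Phi(rt(m))\le r\phi(rt(m))\big(s-t(m)\big)$ (concavity of $\Phi$ on $[0,\infty)$) yields $\int C\,d\gamma\le 2K_1S$ with $K_1=\int_0^\infty r^2\phi(rt(m))f_R(r)\,dr$, and this is precisely the statement $g(b,s;\lambda^*)\ge g$ evaluated at $I$, since your weight $2K_1$ equals the paper's $\ell(\lambda^*)$. Your route is shorter and more elementary—no optimization over a function class, no monotonicity argument for $dr(s)/ds$—at the cost of having to guess the correct weight (the tangent slope at $t(m)$), which is natural only because the answer $I$ is known in advance; the paper's longer route \emph{derives} $I$ as the minimizer and packages the key step as a reusable lemma connected to the compromise-decision-theory literature it cites. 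Your Fubini and continuity justifications are sketched but sound, and they hold for the same reason the paper's do: the integrands are bounded and supported on $|x|\le d$.
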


\noindent The proof of this result is presented in Appendix A.

An illustration of this result is provided by Figure 3 of
Kabaila \& Giri (2009). Define $\gamma = \tau/(\sigma \sqrt{v_{22}})$. Also define
\begin{equation*}
%\label{criterion_initial}
e(\gamma;s) = \frac{\text{expected length of $
J(b, s)$}}
{\text{expected length of $I$}}.
\end{equation*}
We call this the scaled expected length of $J(b,s)$. Theorem 1 tells us that for any confidence interval $J(b,s)$,
with minimum coverage probability $1-\alpha$, it cannot be the case that $e(\gamma;s) \le 1$ for all $\gamma$,
with strict inequality for at least one $\gamma$. This fact is illustrated by the bottom panel of Figure 3 of
Kabaila \& Giri (2009).

Define the class $\widetilde {\cal D}$ to be the subset of ${\cal D}$ in which both $b$ and $s$ are continuous functions.
Strong admissibility of the confidence interval $I$ within the class ${\cal D}$ implies weak admissibility of this
confidence interval within the class $\widetilde {\cal D}$, as the following result shows.
Since $(\hat \beta, \hat \sigma^2)$ is a sufficient statistic for
$(\beta, \sigma)$, we reduce the data to $(\hat \beta, \hat \sigma^2)$.

\begin{corollary}
There does not exist $(c,t,d,b,s) \in \widetilde {\cal D}$ such that the following three conditions hold:
\begin{equation}
\label{len_cond}
\hspace{-4.4cm}(a^{\prime}) \qquad \big ( \text{length of } J(b,s) \big ) \le  \big ( \text{length of } I \big )
\quad \text{for all } (\hat \beta,\hat \sigma^2).
\end{equation}
\begin{equation}
\label{cov_pr_cond_repeat}
\hspace{-4.6cm}(b^{\prime}) \qquad P_{\beta,\sigma^2} \big ( \theta \in J(b,s) \big ) \ge P_{\beta,\sigma^2} \big ( \theta \in I \big )
\quad \text{for all } (\beta,\sigma^2).
\end{equation}
$(c^{\prime})$ \ \ \ \ \ Strict inequality holds in either \eqref{len_cond} or \eqref{cov_pr_cond_repeat} for at least one $(\beta,\sigma^2)$.

\end{corollary}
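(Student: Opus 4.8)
The plan is to prove the corollary by contradiction, reducing its hypotheses directly to those of Theorem~1. Suppose, for the sake of contradiction, that there exists $(c,t,d,b,s) \in \widetilde{\cal D}$ satisfying $(a')$, $(b')$ and $(c')$. Since $\widetilde{\cal D} \subseteq {\cal D}$, this same quintuple also belongs to ${\cal D}$, so it suffices to show that conditions $(a)$, $(b)$ and $(c)$ of Theorem~1 must then hold; this would contradict the theorem and establish the corollary.

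First I would verify that $(a')$ implies $(a)$. The length of $J(b,s)$ equals $2\sqrt{v_{11}}\,\hat\sigma\,s\big(|\hat\tau|/(\hat\sigma\sqrt{v_{22}})\big)$ and the length of $I$ equals $2\sqrt{v_{11}}\,\hat\sigma\,t(m)$, so the pointwise inequality $(a')$ reads $s\big(|\hat\tau|/(\hat\sigma\sqrt{v_{22}})\big) \le t(m)$ for all $(\hat\beta,\hat\sigma^2)$. Taking expectations with respect to the joint distribution of $(\hat\beta,\hat\sigma^2)$ under any fixed $(\beta,\sigma^2)$ immediately yields $(a)$. Condition $(b')$ is word-for-word identical to $(b)$, so $(b)$ holds as well.

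The remaining step is to deduce $(c)$ from $(c')$, and this is where the continuity restriction defining $\widetilde{\cal D}$ does the essential work. If strict inequality holds in $(b')$ for some $(\beta,\sigma^2)$, then it holds in $(b)$ for that same parameter value and $(c)$ is immediate. Otherwise, strict inequality must hold in $(a')$ at some data point, i.e.\ $s\big(|\hat\tau|/(\hat\sigma\sqrt{v_{22}})\big) < t(m)$ there. Because $s$ is continuous on $\widetilde{\cal D}$, the map $(\hat\beta,\hat\sigma^2) \mapsto s\big(|\hat\tau|/(\hat\sigma\sqrt{v_{22}})\big)$ is continuous, so this strict inequality persists on an open set of positive Lebesgue measure. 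Since the joint density of $(\hat\beta,\hat\sigma^2)$ is strictly positive everywhere and $\hat\sigma > 0$, integrating the pointwise bound produces a strict inequality in expected length, i.e.\ strict inequality in $(a)$, for every $(\beta,\sigma^2)$. Either way $(c)$ holds.

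I anticipate the main obstacle to be the careful handling of the strict-inequality clause $(c')$: making precise the phrase ``strict inequality in $(a')$'' (a pointwise length comparison in the data) and converting it into a strict inequality of expected lengths. The continuity of $s$ is precisely what permits a pointwise strict gap to be upgraded to a positive-measure gap, and hence to a strict integral inequality; without the restriction to $\widetilde{\cal D}$ a single-point strict inequality could be absorbed into a Lebesgue-null set and the reduction would break down. With this upgrade in hand, conditions $(a)$, $(b)$ and $(c)$ of Theorem~1 are all satisfied by $(c,t,d,b,s) \in {\cal D}$, contradicting Theorem~1 and completing the proof.
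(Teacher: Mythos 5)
Your proposal is correct and follows essentially the same route as the paper's own proof in Appendix B: reduce $(a')$ to $(a)$ by taking expectations, note $(b')$ is identical to $(b)$, and split $(c')$ into the same two cases, using the continuity of $s$ (hence of the length of $J(b,s)$ as a function of $(\hat\beta,\hat\sigma^2)$) to upgrade a pointwise strict length inequality to a strict inequality in expected length, contradicting Theorem~1. Your treatment is in fact slightly more explicit than the paper's at the key step, spelling out the positive-measure open set and the positivity of the density of $(\hat\beta,\hat\sigma^2)$, which the paper leaves implicit.
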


\noindent This corollary is proved in Appendix B.

%\bigskip

\newpage

\begin{center}
{\large{\bf 4. Admissibility result for known error variance}}
\end{center}

\medskip

In this section, we suppose that $\sigma^2$ is known. Without loss of generality, we assume that $\sigma^2=1$. As before,
let $\hat \beta$ denote the least squares estimator of $\beta$. Since $\hat \beta$ is a sufficient statistic for $\beta$,
we reduce the data to $\hat \beta$. Assume that the parameter of interest is $\theta = \beta_1 / \sqrt{\text{Var}(\hat \beta_1)}$.
Thus the least squares estimator of $\theta$ is $\hat \Theta = \hat \beta_1 / \sqrt{\text{Var}(\hat \beta_1)}$.
Define
\begin{equation*}
\hat \Delta
= \left[\begin{matrix} \hat \beta_2 - \ell_2 \hat \beta_1 \\ \vdots \\ \hat \beta_p - \ell_p \hat \beta_1 \end{matrix} \right]
\end{equation*}
where $\ell_2, \ldots, \ell_p$ have been chosen such that $\text{Cov}(\hat \beta_j - \ell_j \hat \beta_1, \hat \beta_1)=0$
for $j = 2, \ldots, p$. Now define
\begin{equation*}
\delta
= \left[\begin{matrix} \beta_2 - \ell_2  \beta_1 \\ \vdots \\ \beta_p - \ell_p \beta_1 \end{matrix} \right].
\end{equation*}
Note that $(\hat \Theta, \hat \Delta)$ is obtained by a one-to-one transformation from $\hat \beta$. So,
we reduce the data to $(\hat \Theta, \hat \Delta )$. Note that $\hat \Theta$ and $\hat \Delta $ are independent,
with $\hat \Theta \sim N(\theta, 1)$
and $\hat \Delta $ with a multivariate normal distribution with mean $\delta$ and
known covariance matrix. Define the number $z$ by the requirement that $P(-z \le Z \le z) = 1- \alpha$ for
$Z \sim N(0,1)$. Let $I = \big [ \hat \Theta - z, \hat \Theta + z \big]$. Define
\begin{equation*}
\varphi(\hat \theta, \theta) =
\begin{cases}
1 &\text{if } \theta \in \big [ \hat \theta - z,  \hat \theta + z \big ] \\
0 &\text{otherwise}
\end{cases}
\end{equation*}
This is the probability that $\theta$ is included in the confidence interval $I$, when $\hat \theta$ is the
observed value of $\hat \Theta$. The length of the confidence interval $I$ is
$\int_{-\infty}^{\infty} \varphi(\hat \theta, \theta) \, d\theta = 2z$. Let $p_{\theta}(\cdot)$ denote the probability density function
of $\hat \Theta$ for given $\theta$. The coverage probability of $I$ is
$\int_{-\infty}^{\infty} \varphi(\hat \theta, \theta) \, p_{\theta}(\hat \theta) \, d\hat \theta = 1-\alpha$.

Now let ${\cal C}(\hat \Theta, \hat \Delta)$ denote a confidence set for $\theta$. Define
\begin{equation*}
\varphi_{\delta}(\hat \theta, \theta) = P_{\theta, \delta} \big (\theta \in {\cal C}(\hat \theta, \hat \Delta) \big),
\end{equation*}
where $\hat \theta$ denotes the observed value of $\hat \Theta$. For each given $\delta \in \mathbb{R}^{p-1}$,
the expected Lebesgue measure of ${\cal C}(\hat \Theta, \hat \Delta )$
is $E_{\theta, \delta} \Big (\int_{-\infty}^{\infty} \varphi_{\delta}(\hat \Theta, \theta) \, d\theta \Big )$.
For each given $\delta \in \mathbb{R}^{p-1}$,
the coverage probability of ${\cal C}(\hat \Theta, \hat \Delta )$ is
$\int_{-\infty}^{\infty} \varphi_{\delta}(\hat \theta, \theta) \, p_{\theta}(\hat \theta) \, d\hat \theta$.
Theorem 5.1 of Joshi (1969) implies the following strong admissibility result.
Suppose that $\varphi_{\delta}(\hat \theta, \theta)$ satisfies the following conditions
\begin{enumerate}
\item[(i)]
$E_{\theta, \delta} \Big (\int_{-\infty}^{\infty} \varphi_{\delta}(\hat \theta, \theta)  \, d\theta \Big )
\le E_{\theta, \delta} \Big (\int_{-\infty}^{\infty} \varphi(\hat \theta, \theta) \, d\theta \Big )$ \ for all $\theta \in \mathbb{R}$.

\item[(ii)]
$\int_{-\infty}^{\infty} \varphi_{\delta}(\hat \theta, \theta) \, p_{\theta}(\hat \theta) \, d\hat \theta
\ge \int_{-\infty}^{\infty} \varphi(\hat \theta, \theta) \, p_{\theta}(\hat \theta) \, d\hat \theta$
for all $\theta \in \mathbb{R}$.

\end{enumerate}
Then $\varphi_{\delta}(\hat \theta, \theta) = \varphi(\hat \theta, \theta)$ for almost all
$(\hat \theta, \theta) \in \mathbb{R}^2$. This result is true
for each $\delta \in \mathbb{R}^{p-1}$.
Using standard arguemnts, this entails that
$I \setminus {\cal C}(\hat{\Theta}, \hat{\Delta})$ and
${\cal C}(\hat{\Theta}, \hat{\Delta})\setminus I$ are Lebesgue-null sets,
for (Lebesgue-) almost all values of $(\hat{\Theta}, \hat{\Delta})$.

\bigskip

\begin{center}
{\large{\bf Appendix A: Proof of Theorem 1}}
\end{center}

\medskip

Suppose that $c$ is a given vector (such that $c$ and $a$ are linearly
independent), $t$ is a given number and $d$ is a given positive
number. The proof of Theorem 1 now proceeds as follows.
We present a few definitions and a lemma.
We then apply this lemma to prove this theorem.

Define $W = \hat \sigma/\sigma$.
Note that $W$ has the same distribution as $\sqrt{Q/m}$ where $Q
\sim \chi^2_m$.
Let $f_W$ denote the probability density function of $W$.
Also let $\phi$ denote the $N(0,1)$ probability density function.
Now define
\begin{equation*}
%\label{criterion_initial}
R_1(b,s;\gamma) = \frac{\text{expected length of $
J(b, s)$}}
{\text{expected length of $I$}} - 1.
\end{equation*}
It follows from (7) of Kabaila \& Giri (2009) that
\begin{equation}
\label{R1}
R_1(b,s;\gamma) = \frac{1} {t(m) \, E(W)}
 \int^{\infty}_0 \int^{d}_{- d} \left (s(|x|) - t(m) \right )
\phi(w x -\gamma) \, dx \,  w^2 \,  f_W(w)  \, dw.
\end{equation}
Thus, for each $(b,s) \in {\cal F}(d)$,
$R_1(b,s;\gamma)$ is a continuous function of $\gamma$.

Also define
$R_2(b,s;\gamma) = P \big(\theta \notin J(b,s) \big) - \alpha$.
We make the following definitions, also used by Kabaila \& Giri (2009).
Define $\rho = v_{12}/\sqrt{v_{11} v_{22}}$ and $\Psi(x, y; \mu, v) = P(x \le Z \le y)$,
for $Z \sim N(\mu,v)$.
Now define the functions
\begin{align*}
k^{\dag}(h,w, \gamma, \rho) &= \Psi \big( -t(m) w, t(m) w;
 \rho(h-\gamma), 1-\rho^2 \big ) \\
k(h,w,\gamma, \rho) &= \Psi \big(b(h/w) w - s(|h|/w) w, b(h/w) w + s(|h|/w) w; \rho(h-\gamma),1-\rho^2 \big ).
\end{align*}
It follows from (6) of Kabaila \& Giri (2009), that
\begin{equation}
\label{R2}
R_2(b,s; \gamma) = -\int_0^{\infty} \int_{-d}^{d} \big( k(wx,w, \gamma, \rho) - k^{\dag}(wx,w, \gamma, \rho) \big)
\, \phi(wx-\gamma)\,  dx \, w \, f_W(w) \, dw .
\end{equation}
Thus, for each $(b,s) \in {\cal F}(d)$,
$R_2(b,s;\gamma)$ is a continuous function of $\gamma$.

Now $E(W^2)=1$ and so
\begin{equation*}
\int_0^{\infty} w^2 \, f_W(w) \, dw = 1.
\end{equation*}
It follows from \eqref{R1} that
\begin{equation}
\label{int_R1}
\int_{-\infty}^{\infty} R_1(b,s;\gamma) \, d\gamma = \frac{2} {t(m) \, E(W)}
 \int^{d}_0 \big (s(x) - t(m) \big ) \, dx.
\end{equation}
Thus $\int_{-\infty}^{\infty} R_1(b,s;\gamma)\, d\gamma$ exists for all $(b,s) \in {\cal F}(d)$.

Since $k(wx,w, \gamma, \rho)$ and $k^{\dag}(wx,w, \gamma, \rho)$ are probabilities,
\begin{equation*}
|R_2(b,s;\gamma)| \le \int_0^{\infty} \int_{-d}^d \phi(wx-\gamma) dx \, w f_W(w) \, dw,
\end{equation*}
so that
\begin{equation*}
\int_{-\infty}^{\infty} |R_2(b,s;\gamma)| \, d \gamma \le 2 d \int_0^{\infty}  w f_W(w) \, dw = 2 d E(W) < \infty.
\end{equation*}
Thus $\int_{-\infty}^{\infty} R_2(b,s;\gamma)\, d\gamma$ exists for all
 $(b,s) \in {\cal F}(d)$.

Thus, we may define
\begin{equation*}
g(b,s;\lambda) = \lambda \int_{-\infty}^{\infty} R_1(b,s;\gamma)\, d\gamma + (1-\lambda) \int_{-\infty}^{\infty} R_2(b,s;\gamma)\, d\gamma,
\end{equation*}
for each $(b,s) \in {\cal F}(d)$, where $0 < \lambda < 1$.
Kempthorne (1983, 1987, 1988) presents results on what he
calls compromise decision theory. Initially, these results were applied only to the solution
of some problems of point estimation. Kabaila \& Tuck (2008) develop new results in compromise decision theory and apply these to
a problem of interval estimation. The following lemma, which will be used in the proof
of Theorem 1, is in the style of these compromise decision theory results.

\begin{lemma}
Suppose that $c$ is a given vector (such that $c$ and $a$ are linearly
independent), $t$ is a given number and $d$ is a given positive
number.
Also suppose that $\lambda$ is given and that $(b^*,s^*)$ minimizes $g(b,s;\lambda)$ with respect to
$(b,s) \in {\cal F}(d)$. Then there does not exist $(b,s) \in {\cal F}(d)$ such that
\begin{enumerate}
\item
$R_1(b,s;\gamma) \le R_1(b^*,s^*;\gamma)$ for all $\gamma$.

\item
$R_2(b,s;\gamma) \le R_2(b^*,s^*;\gamma)$ for all $\gamma$.

\item
Strict inequality holds in either (a) or (b) for at least one $\gamma$.
\end{enumerate}
\end{lemma}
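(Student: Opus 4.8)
The plan is to argue by contradiction, exploiting the fact that $(b^*,s^*)$ minimizes the single scalar functional $g(\cdot;\lambda)$, which is a positive-weighted average of the two integrated risks. Suppose, contrary to the claim, that there exists $(b,s) \in {\cal F}(d)$ satisfying (a), (b) and (c). I would then compare $g(b,s;\lambda)$ with $g(b^*,s^*;\lambda)$ by writing their difference as
\[
g(b,s;\lambda) - g(b^*,s^*;\lambda) = \lambda \int_{-\infty}^{\infty}\big(R_1(b,s;\gamma)-R_1(b^*,s^*;\gamma)\big)\,d\gamma + (1-\lambda)\int_{-\infty}^{\infty}\big(R_2(b,s;\gamma)-R_2(b^*,s^*;\gamma)\big)\,d\gamma.
\]
All four integrals converge, since it has already been established that $\int_{-\infty}^{\infty} R_1(b,s;\gamma)\,d\gamma$ and $\int_{-\infty}^{\infty} R_2(b,s;\gamma)\,d\gamma$ exist for every element of ${\cal F}(d)$, so this difference is well defined.

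Next I would examine the sign of each integrand. By (a) the first integrand $R_1(b,s;\gamma)-R_1(b^*,s^*;\gamma)$ is $\le 0$ for every $\gamma$, and by (b) the second integrand is likewise $\le 0$ for every $\gamma$. Since $0 < \lambda < 1$, both weights $\lambda$ and $1-\lambda$ are strictly positive, so the entire right-hand side is $\le 0$, giving $g(b,s;\lambda) \le g(b^*,s^*;\lambda)$. To reach a contradiction with the minimality of $(b^*,s^*)$, it remains only to show that at least one of the two integrals is strictly negative.

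This is where the continuity of the risk functions in $\gamma$ — noted immediately after the definitions of $R_1$ and $R_2$ — does the essential work, and it is the step I expect to be the only genuinely delicate one. Condition (c) provides a single value $\gamma_0$ at which strict inequality holds in (a) or in (b); a strict inequality at one point alone would not make an integral strictly negative. However, the relevant difference $\gamma \mapsto R_i(b,s;\gamma)-R_i(b^*,s^*;\gamma)$ is a difference of continuous functions, hence continuous, so being strictly negative at $\gamma_0$ forces it to remain strictly negative on an open interval about $\gamma_0$, a set of positive Lebesgue measure. Since that difference is also $\le 0$ everywhere else, the corresponding integral is strictly negative; the other integral is $\le 0$; and both are carried by strictly positive weights. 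Hence $g(b,s;\lambda) < g(b^*,s^*;\lambda)$, contradicting the assumption that $(b^*,s^*)$ minimizes $g(\cdot;\lambda)$ over ${\cal F}(d)$. This contradiction establishes the lemma.
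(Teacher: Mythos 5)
Your proposal is correct and follows essentially the same argument as the paper: a proof by contradiction that decomposes $g(b,s;\lambda)-g(b^*,s^*;\lambda)$ into the two weighted integrals, uses (a) and (b) for nonpositivity of the integrands, and uses the continuity of $R_1$ and $R_2$ in $\gamma$ to convert the pointwise strict inequality from (c) into a strictly negative integral, contradicting the minimality of $(b^*,s^*)$. The paper merely organizes the final step as two explicit cases (strict inequality occurring for $R_1$ or for $R_2$), which you handle in a single unified statement.
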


\begin{proof}
Suppose that $c$ is a given vector (such that $c$ and $a$ are linearly
independent), $t$ is a given number and $d$ is a given positive
number.
The proof is by contradiction. Suppose that there exist $(b,s) \in {\cal F}(d)$ such that
$(a)$, $(b)$ and $(c)$ hold. Now,
\begin{align*}
g(b^*, s^*;\lambda) - g(b, s;\lambda) &=
\lambda \int_{-\infty}^{\infty} \big (R_1(b^*, s^*;\gamma) - R_1(b, s;\gamma) \big )\, d\gamma \\
&\phantom{123}+
(1-\lambda) \int_{-\infty}^{\infty} \big (R_2(b^*, s^*;\gamma) - R_2(b,s;\gamma) \big )\, d\gamma
\end{align*}
By hypothesis, one of the following 2 cases holds.

\medskip

\noindent \underbar{Case 1} \ $(a)$ and $(b)$ hold and
$R_1(b^*, s^*;\gamma) - R_1(b, s;\gamma) > 0$ for at least one $\gamma$.
Since $R_1(b^*, s^*;\gamma) - R_1(b, s;\gamma)$ is a continuous function
of $\gamma$,
\begin{equation*}
\int_{-\infty}^{\infty} \big (R_1(b^*, s^*;\gamma) - R_1(b, s;\gamma) \big )\, d\gamma > 0.
\end{equation*}
Thus $g(b^*, s^*;\lambda) > g(b, s;\lambda)$ and we have established a
contradiction.

\medskip

\noindent \underbar{Case 2} \ $(a)$ and $(b)$ hold and
$R_2(b^*, s^*;\gamma) - R_2(b, s;\gamma) > 0$ for at least one $\gamma$.
Since $R_2(b^*, s^*;\gamma) - R_2(b, s;\gamma)$ is a continuous function
of $\gamma$,
\begin{equation*}
\int_{-\infty}^{\infty} \big (R_2(b^*, s^*;\gamma) - R_2(b, s;\gamma) \big )\, d\gamma > 0.
\end{equation*}
Thus $g(b^*, s^*;\lambda) > g(b, s;\lambda)$ and we have established a
contradiction.

\medskip

\noindent Lemma 1 follows from the fact that this argument holds for every given vector $c$ (such that $c$ and $a$ are linearly
independent), every given number $t$ and every given positive number $d$.

\end{proof}

\medskip

 We will first find the $(b^*,s^*)$ that minimizes
$g(b,s;\lambda)$ with respect to $(b,s) \in {\cal F}(d)$, for given $\lambda$.
We will then choose $\lambda$ such that $J(b^*, s^*) = I$, the usual $1-\alpha$ confidence interval for
$\theta$.
Theorem 1 is then a consequence of Lemma 1.

By changing the variable of integration in the inner integral
in \eqref{R2}, it can be shown that $R_2(b,s; \gamma)$
is equal to
\begin{align*}
-\int_0^{\infty} \int_0^{d} \Big ( &\big( k(wx,w, \gamma, \rho) - k^{\dag}(wx,w, \gamma, \rho) \big)
\, \phi(wx-\gamma) + \\
&\big( k(-wx,w, \gamma, \rho) - k^{\dag}(-wx,w, \gamma, \rho) \big)
\, \phi(wx+\gamma) \Big) \,  dx \, w \, f_W(w) \, dw
\end{align*}
Using this expression and the restriction that $b$ is an odd function, we find that
$\int_{-\infty}^{\infty} R_2(b,s; \gamma) \, d \gamma$ is equal to
\begin{align*}
-\int_0^d  \int_0^{\infty} \int_{-\infty}^{\infty} \Big (
&\Psi \big(b(x)w-s(x)w, b(x)w + s(x)w; \rho y, 1-\rho^2 \big) \\
&-\Psi \big(-t(m)w, t(m)w; \rho y, 1-\rho^2 \big) \\
&+\Psi \big(-b(x)w-s(x)w, -b(x)w + s(x)w; -\rho y, 1-\rho^2 \big) \\
&-\Psi \big(-t(m)w, t(m)w; -\rho y, 1-\rho^2 \big) \Big)
\, \phi(y) \, dy \,w \, f_W(w) \, dw \, dx .
\end{align*}
Hence, to within an additive constant that does
not depend on $(b,s)$, $\int_{-\infty}^{\infty} R_2(b,s; \gamma) \, d \gamma$ is equal to
\begin{align*}
-\int_0^d  \int_0^{\infty} &\int_{-\infty}^{\infty} \Big (
\Psi \big(b(x)w-s(x)w, b(x)w + s(x)w; \rho y, 1-\rho^2 \big) \\
&+\Psi \big(-b(x)w-s(x)w, -b(x)w + s(x)w; -\rho y, 1-\rho^2 \big)
\Big)
\, \phi(y) \, dy \,w \, f_W(w) \, dw \, dx .
\end{align*}
Thus, to within an additive constant that does not depend on $(b,s)$,
\begin{equation*}
g(b,s;\lambda) = \int_0^d q(b,s;x) \, dx,
\end{equation*}
where $q(b,s;x)$ is equal to
\begin{align*}
&\frac{2 \lambda} {t(m) \, E(W)}
 s(x) \\
 &-(1-\lambda) \int_0^{\infty} \int_{-\infty}^{\infty} \big (\Psi(b(x)w-s(x)w, b(x)w + s(x)w; \rho y, 1-\rho^2) \\
&\phantom{1234567}+\Psi(-b(x)w-s(x)w, -b(x)w + s(x)w; -\rho y, 1-\rho^2)
\big)
\, \phi(y) \, dy \,w \, f_W(w) \, dw.
\end{align*}
Note that $x$ enters into the expression
for $q(b,s;x)$ only through $b(x)$ and $s(x)$.
To minimize $g(b,s;\lambda)$ with respect to $(b,s) \in {\cal F}(d)$, it is therefore sufficient to minimize
$q(b,s;x)$ with respect to $(b(x),s(x))$ for each $x \in [0,d]$.
The situation here is similar to the computation of Bayes rules, see e.g. Casella \& Berger (2002, pp. 352--353).
Therefore, to minimize $g(b,s;\lambda)$ with respect to
$(b,s) \in {\cal F}(d)$, we simply minimize
\begin{align*}
\tilde q(b,s) = &\frac{2 \lambda} {t(m) \, E(W)}
 s \\
 &-(1-\lambda) \int_0^{\infty} \int_{-\infty}^{\infty} \big (\Psi(bw-sw, bw + sw; \rho y, 1-\rho^2) \\
&\phantom{1234567890}+\Psi(-bw-sw, -bw + sw; -\rho y, 1-\rho^2)
\big)
\, \phi(y) \, dy \,w \, f_W(w) \, dw
\end{align*}
with respect to $(b,s) \in \mathbb{R} \times (0,\infty)$, to obtain $(b^{\prime}, s^{\prime})$ and then
set $b(x) = b^{\prime}$ and $s(x) = s^{\prime}$ for all $x \in [0,d]$.

Let the random variables $A$ and $B$ have the following distribution
\begin{equation*}
\left[\begin{matrix} A\\ B \end{matrix}
\right] \sim N \left ( \left[\begin{matrix} 0 \\ 0 \end{matrix}
\right], \left[\begin{matrix} 1 \quad \rho\\ \rho \quad 1 \end{matrix}
\right] \right ).
\end{equation*}
Note that the distribution of $A$, conditional on $B=y$, is $N(\rho y, 1-\rho^2)$.
Thus
\begin{equation*}
\Psi(bw - sw, bw + sw; \rho y, 1-\rho^2) = P \big(bw - sw \le A \le bw + sw \, \big| \, B=y \big)
\end{equation*}
Hence
\begin{align}
\label{first_to_maximize}
&\int_0^{\infty} \int_{-\infty}^{\infty} \Psi(bw-sw, bw + sw; \rho y, 1-\rho^2)
\, \phi(y) \, dy \,w \, f_W(w) \, dw \notag \\
&=\int_0^{\infty} P (bw - sw \le A \le bw + sw ) \,w \, f_W(w) \, dw .
\end{align}
Let $\Phi$ denote the $N(0,1)$ cumulative distribution function.
For every fixed $w>0$ and $s>0$,
\begin{equation*}
P (bw - sw \le A \le bw + sw ) = \Phi(bw + sw) - \Phi(bw - sw)
\end{equation*}
is maximized by setting $b=0$. Thus, for each fixed $s>0$, \eqref{first_to_maximize}
is maximized with respect to $b \in \mathbb{R}$ by setting $b=0$.

Now let the random variables $\tilde A$ and $\tilde B$ have the following distribution
\begin{equation*}
\left[\begin{matrix} \tilde A\\ \tilde B \end{matrix}
\right] \sim N \left ( \left[\begin{matrix} 0 \\ 0 \end{matrix}
\right], \left[\begin{matrix} 1 \quad -\rho\\ -\rho \quad 1 \end{matrix}
\right] \right ).
\end{equation*}
Note that the distribution of $\tilde A$, conditional on $\tilde B=y$, is $N(-\rho y, 1-\rho^2)$.
Thus
\begin{equation*}
\Psi(-bw - sw, -bw + sw; -\rho y, 1-\rho^2) = P \big(-bw - sw \le \tilde A \le -bw + sw \, \big| \, \tilde B=y \big)
\end{equation*}
Hence
\begin{align}
\label{second_to_maximize}
&\int_0^{\infty} \int_{-\infty}^{\infty} \Psi(-bw-sw, -bw + sw; -\rho y, 1-\rho^2)
\, \phi(y) \, dy \,w \, f_W(w) \, dw \notag \\
&=\int_0^{\infty} P (-bw - sw \le \tilde A \le -bw + sw ) \,w \, f_W(w) \, dw .
\end{align}
For every fixed $w>0$ and $s>0$,
\begin{equation*}
P \big(-bw - sw \le \tilde A \le -bw + sw \big) = \Phi(-bw + sw) - \Phi(-bw - sw)
\end{equation*}
is maximized by setting $b=0$. Thus, for each fixed $s>0$, \eqref{second_to_maximize}
is maximized with respect to $b \in \mathbb{R}$ by setting $b=0$.

Therefore, $\tilde q(b,s)$ is, for each fixed $s > 0$, minimized with respect to $b$ by
setting $b=0$. Thus $b^{\prime}=0$ and so $b^*(x) = 0$ for all $x \in \mathbb{R}$.
Hence, to find $s^{\prime}$ we need to minimize
\begin{equation*}
\frac{\lambda}{t(m) E(W)} s - (1-\lambda) \int_0^{\infty} \big ( 2 \Phi(sw) - 1 \big) \, w f_W(w) \, dw
\end{equation*}
with respect to $s > 0$. Therefore, to find $s^{\prime}$ we may minimize
\begin{equation*}
%\label{thing_to_minimize}
r(s) = \ell(\lambda) \, s - 2 \int_0^{\infty} \Phi(sw) \, w f_W(w) \, dw
\end{equation*}
with respect to $s > 0$, where
\begin{equation*}
\ell(\lambda) = \frac{\lambda}{(1-\lambda) t(m) E(W)}.
\end{equation*}
Note that $\ell(\lambda)$ is an increasing function of $\lambda$,
such that $\ell(\lambda) \downarrow 0$ as $\lambda \downarrow 0$ and
$\ell(\lambda) \uparrow \infty$ as $\lambda \uparrow 1$.
Choose $\lambda = \lambda^*$, where
\begin{equation*}
\ell(\lambda^*) = 2 \int_0^{\infty} \phi \big( t(m) w \big) \, w^2 \, f_W(w) \, dw.
\end{equation*}
Note that $0 < \ell(\lambda^*) < \sqrt{2 / \pi}$. Now
\begin{equation*}
%\label{deriv}
\frac{d r(s)}{ds} = \ell(\lambda^*)  - 2 \int_0^{\infty} \phi(sw) \, w^2 f_W(w) \, dw .
\end{equation*}
Since $\int_0^{\infty} \phi(sw) \, w^2 f_W(w) \, dw$ is a decreasing function of $s > 0$,
$dr(s)/ds$ is an increasing function of $s > 0$.
Also, for $s=0$, $\int_0^{\infty} \phi(sw) \, w^2 f_W(w) \, dw = 1/\sqrt{2 \pi}$.
Thus, to minimize
$r(s)$ with respect to $s > 0$, we need to solve
\begin{equation*}
\ell(\lambda^*) - 2 \int_0^{\infty} \phi(sw) \, w^2 \, f_W(w) \, dw = 0
\end{equation*}
for $s > 0$. Obviously, this solution in $s = t(m)$.
Thus $s^*(x) = t(m)$ for all $x \ge 0$. In other words, $J(b^*, s^*) = I$.
By Lemma 1, there does not exist $(b,s) \in {\cal F}(d)$ such that
\begin{equation}
\label{exp_len_cond_repeat}
\hspace{-2.6cm}(a) \qquad E_{\beta,\sigma^2} \big ( \text{length of } J(b,s) \big ) \le E_{\beta,\sigma^2} \big ( \text{length of } I \big )
\quad \text{for all } (\beta,\sigma^2).
\end{equation}
\begin{equation}
\label{cov_pr_cond_repeat}
\hspace{-4.6cm}(b) \qquad P_{\beta,\sigma^2} \big ( \theta \in J(b,s) \big ) \ge P_{\beta,\sigma^2} \big ( \theta \in I \big )
\quad \text{for all } (\beta,\sigma^2).
\end{equation}
\ $(c)$ \ \ \ \ Strict inequality holds in either \eqref{exp_len_cond_repeat} or \eqref{cov_pr_cond_repeat} for at least one $(\beta,\sigma^2)$.

\medskip

\noindent Theorem 1 follows from the fact that this argument holds for every given vector $c$ (such that $c$ and $a$ are linearly
independent), every given number $t$ and every given positive number $d$.

\bigskip

\begin{center}
{\large{\bf Appendix B: Proof of Corollary 1}}
\end{center}

\medskip

The proof of Corollary 1 is by contradiction. Suppose that $c$ is a given vector (such that $c$ and $a$ are linearly
independent), $t$ is a given number and $d$ is a given positive
number. Also suppose that there exists $(b,s) \in {\cal F}(d)$
such that both $b$ and $s$ are continuous and $(a^{\prime})$, $(b^{\prime})$ and $(c^{\prime})$, in the statement of Corollary 1, hold.
Now $(a^{\prime})$ implies that
\begin{equation*}
E_{\beta,\sigma^2} \big ( \text{length of } J(b,s) \big ) \le E_{\beta,\sigma^2} \big ( \text{length of } I \big )
\quad \text{for all } (\beta,\sigma^2),
\end{equation*}
so that $(a)$ holds.
By hypothesis, one of the following two cases holds.

\medskip

\noindent \underbar{Case 1} \ \  $\big ( \text{length of } J(b,s) \big ) <  \big ( \text{length of } I \big )
\quad \text{for at least one } (\hat \beta,\hat \sigma^2)$. Now
\begin{equation*}
\big ( \text{length of } J(b,s) \big ) =
2 \sqrt{v_{11}} \hat \sigma \, s \left ( \frac{|\hat \tau|}{\hat \sigma \sqrt{v_{22}}} \right ),
\end{equation*}
which is a continuous function of $(\hat \beta,\hat \sigma^2)$.
Hence $\big ( \text{length of } I \big ) - ( \text{length of } J(b,s) \big )$ is
a continuous function of $(\hat \beta,\hat \sigma^2)$.
Thus
\begin{equation*}
E_{\beta,\sigma^2} \big ( \text{length of } J(b,s) \big ) < E_{\beta,\sigma^2} \big ( \text{length of } I \big )
\quad \text{for at least one } (\beta,\sigma^2).
\end{equation*}
Thus there exists $(b,s) \in {\cal F}(d)$ such that $(a)$, $(b)$ and $(c)$, in the statement of Theorem 1, hold.
We have established a contradiction.

\medskip

\noindent \underbar{Case 2} \ There is strict inequality in $(b^{\prime})$ for at least one $(\beta,\sigma^2)$.
Thus there exists $(b,s) \in {\cal F}(d)$ such that $(a)$, $(b)$ and $(c)$, in the statement of Theorem 1, hold.
We have established a contradiction.

\medskip

\noindent Corollary 1 follows from the fact that this argument holds for every given vector $c$ (such that $c$ and $a$ are linearly
independent), every given number $t$ and every given positive number $d$.

\begin{center}
\nin{\sl References}
\end{center}

\smallskip

\rf {\sc CASELLA, G. \& BERGER, R.L.} (2002). {\sl Statistical Inference}, 2nd ed.. Pacific Grove, CA: Duxbury.

\rf {\sc JOSHI, V.M.} (1969). Admissibility of the usual confidence sets for the mean of a univariate
or bivariate normal population. {\sl Annals of Mathematical Statistics}, {\bf 40}, 1042--1067.

\smallskip

\rf {\sc JOSHI, V.M.} (1982). Admissibility. On pp.25--29 of Vol. 1 of {\sl Encyclopedia of Statistical Sciences},
 editors-in-chief, Samuel Kotz, Norman L. Johnson ; associate editor, Campbell B. Read.
New York: John Wiley.

\smallskip

\rf {\sc KABAILA, P. \& GIRI, K.} (2009). Confidence intervals in regression utilizing prior
information. {\sl Journal of Statistical Planning and Inference}, {\bf 139}, 3419--3429.

\smallskip

\rf {\sc KABAILA, P. \& TUCK, J.} (2008). Confidence intervals utilizing prior
information in the Behrens-Fisher problem. {\sl Australian \& New Zealand Journal of Statistics}
{\bf 50}, 309--328.

\smallskip

\rf {\sc KEMPTHORNE, P.J.} (1983). Minimax-Bayes compromise estimators. In
{\sl 1983 Business and Economic Statistics Proceedings of the
American Statistical Association}, Washington DC, pp.568--573.

\smallskip

\rf {\sc KEMPTHORNE, P.J.} (1987).  Numerical specification of
discrete least favourable prior distributions.  SIAM
{\sl Journal on Scientific and Statistical Computing} {\bf 8}, 171--184.

\smallskip

\rf {\sc KEMPTHORNE, P.J.} (1988). Controlling risks under different loss
functions: the compromise decision problem. {\sl Ann.
Statist.} {\bf 16}, 1594--1608.

\smallskip

\rf {\sc SALEH, A.K.Md.E.} (2006) Theory of Preliminary Test and Stein-Type Estimation
with Applications. Hoboken, NJ: John Wiley.

\end{document}